\def\subjclass#1{{\renewcommand{\thefootnote}{}%
\footnote{\hspace{-0.6cm}\emph{Mathematics Subject Classification
(2020):} #1}}}
\def\subj#1{{\renewcommand{\thefootnote}{}%
\footnote{\hspace{-0.6cm}\emph{keywords:} #1}}}
\newtheorem{thm}{Theorem}[section]
\newtheorem{cor}[thm]{Corollary}
\newtheorem{lem}[thm]{Lemma}
\newtheorem{mainthm}[thm]{Main Theorem}
\theoremstyle{definition}
\newtheorem{defin}[thm]{Definition}
\newtheorem{rem}[thm]{Remark}
\newtheorem{exa}[thm]{Example}
\numberwithin{equation}{section}
\begin{document}


\baselineskip=17pt


\title{\bf\textsc{Periodicity of $p$-adic Expansion
of Rational Number}}

\author{Rafik BELHADEF, Henri-Alex ESBELIN}

\date{}

\maketitle

\subjclass{11A07, 11D88, 11Y99} \subj{$p$-adic expansion, $p$-adic
number, rational number}


\begin{abstract}
In this paper we give an algorithm to calculate the coefficients of
the $p$-adic expansion of a rational numbers, and we give a method to decide whether this expansion is periodic or ultimately periodic.

\end{abstract}

\section{Introduction}
It is known that in $\mathbb{R}$, an element is rational if and only
if its decimal expansion is ultimately periodic. An important
analogous theorem for the $p$-adic expansion of rational
number, is given by the following statement (see \cite{1}):
\begin{thm}
The number $x\in \mathbb{Q}_{p}$ is rational if
and only if the sequence of digits of its $p$-adic expansion is periodic or ultimately periodic.
\end{thm}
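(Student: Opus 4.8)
The plan is to reduce first to the case $x\in\mathbb{Z}_p$ and then prove the two implications separately: the "eventually periodic $\Rightarrow$ rational" direction via the geometric series in $\mathbb{Q}_p$, and the converse via a pigeonhole argument applied to a carefully bounded sequence of remainders produced by the digit-extraction algorithm. For the reduction, I would note that every $x\in\mathbb{Q}_p$ can be written $x=p^m y$ with $m\in\mathbb{Z}$ and $y\in\mathbb{Z}_p$, that the digit sequence of $x$ is then just a shift (by $m$) of that of $y$, and that both rationality and eventual periodicity of a one-sided digit sequence are invariant under such a shift; hence it suffices to treat $x\in\mathbb{Z}_p$.

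For "eventually periodic $\Rightarrow$ rational": suppose $x=\sum_{n\ge 0}a_np^n$ with $(a_n)_{n\ge 0}$ periodic of period $\ell$ from index $N$ on. Write $x=\sum_{n=0}^{N-1}a_np^n+p^N\sum_{j\ge 0}\bigl(\sum_{i=0}^{\ell-1}a_{N+i}p^i\bigr)p^{j\ell}$. The first term is an integer. In the second, the inner sum is some integer $B$, and $\sum_{j\ge 0}p^{j\ell}=(1-p^\ell)^{-1}$ holds in $\mathbb{Q}_p$ since $|p^\ell|_p<1$ and $1-p^\ell\neq 0$; therefore $x$ is an integer plus $p^N B(1-p^\ell)^{-1}$, a rational number. (The purely periodic case is $N=0$.)

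For the converse, I would run the standard algorithm and keep the remainders bounded. Write $x=a/b$ with $a\in\mathbb{Z}$, $b\in\mathbb{Z}_{>0}$, $\gcd(b,p)=1$ (possible because $x\in\mathbb{Z}_p$). Set $c_0=a$; given $c_n\in\mathbb{Z}$, let $a_n\in\{0,\dots,p-1\}$ be the unique residue with $b\,a_n\equiv c_n\pmod p$, and put $c_{n+1}=(c_n-b\,a_n)/p\in\mathbb{Z}$. One checks by induction that $x-\sum_{i=0}^{n}a_ip^i=p^{n+1}\,c_{n+1}/b$, so $(a_n)$ is exactly the $p$-adic digit sequence of $x$. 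The key estimate is: if $|c_n|\le M:=\max(|a|,b)$, then $|c_{n+1}|=|c_n-b\,a_n|/p\le\bigl(M+b(p-1)\bigr)/p\le M$, the last inequality because $b\le M$. Hence every $c_n$ lies in the finite set $\{-M,\dots,M\}$, so by pigeonhole there are $i<j$ with $c_i=c_j$; since $a_n$ and $c_{n+1}$ are functions of $c_n$ alone, $(c_n)$ — and therefore $(a_n)$ — is periodic from index $i$ onward, with period dividing $j-i$.

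I expect the main obstacle to be isolating the right bound for the remainders: the naive estimate only gives $|c_{n+1}|\le |c_n|+b(p-1)$, which does not close the induction, and it is the observation that $M=\max(|a|,b)$ is a genuine invariant of the recursion that makes the pigeonhole argument go through (and incidentally yields an explicit bound on the pre-period and period lengths). The remaining ingredients — the reduction to $\mathbb{Z}_p$, the convergence of the geometric series in $\mathbb{Q}_p$, and the verification that the recursion really computes the $p$-adic digits — are routine.
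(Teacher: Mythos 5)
Your proposal is correct, and it is worth noting that the paper itself does not actually prove this theorem: it states it with a citation to Bachman, and the body of the paper (the algorithm (2.1), Lemmas 2.2 and 2.3, and the Main Theorem) is exactly the machinery for the hard direction, ``rational $\Rightarrow$ ultimately periodic,'' that you rederive. Within that shared strategy your treatment differs in two useful ways. First, your invariant $M=\max(|a|,b)$ closes the induction on the remainders in one line, uniformly in $a$ and $b$ (including negative numerators), whereas the paper's Main Theorem establishes boundedness of the $\beta_i$ only for $c/d\in\mathbb{Q}^{+}$ and only after a three-way case split ($c<d$; $c>d$ with $\tfrac{c(p-1)}{2dp}<1$; $c>d$ with $\tfrac{c(p-1)}{2dp}>1$ and the auxiliary index $m$), plus a separate corollary for $p=2$; the price of your coarser bound is that you lose the sharper information the paper extracts, namely that the remainders eventually fall into $(-d,d)$, which gives a better a priori bound on the eventual period. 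Second, you supply the pieces the paper leaves entirely to the reference: the explicit pigeonhole step (the paper only asserts that boundedness suffices), the converse direction via the geometric series $\sum_j p^{j\ell}=(1-p^{\ell})^{-1}$, and the reduction from $\mathbb{Q}_p$ to $\mathbb{Z}_p$ by factoring out $p^{v_p(x)}$. Your identity $x-\sum_{i=0}^{n}a_ip^i=p^{n+1}c_{n+1}/b$ is precisely the paper's relation (2.3), and your verification that the recursion computes the digits is its Lemma 2.2, so the proposal is a complete and self-contained version of what the paper only sketches.
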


For example, in $\mathbb{Q}_{3}$, the $p$-adic expansion of
$-\frac{1}{2}$ is $1+3+3^{2}+3^{3}+...=111111111111 $,
it is clear that this expansion is purely periodic. In the second example in $\mathbb{Q}_{3}$, the $p$-adic expansion of
$\frac{11}{5}$ is given by $1+1.3+1.3^{2}+2.3^{3}+1.3^{4}+0.3^{5}+...=1112101210121012101210.....$
This expansion is ultimately periodic, with periodic block $1210$. Another example in $\mathbb{Q}_{5}$, the $p$-adic expansion of
$\frac{213}{7}$ is given by $4+1.5+3.5^{2}+1.5^{3}+4.5^{4}+2.5^{5}+3.5^{6}+0.5^{7}+2.5^{8}+...=413142302142302...$
This expansion is ultimately periodic, with periodic block $142302$. 

Evertse in \cite{3}, gave an algorithm to calculate the coefficients of $p$-adic expansion of an element in
$\mathbb{Z}_{p}$. We continue the study of the characterization of p-adic numbers (see \cite{2}), we inspired by the works of Evertse, we propose the algorithm (\ref{frac}), to calculate a sequence of digits of a rational number $\dfrac{c}{d}$, then we prove that this sequence defines the $p$-adic expansion of $\dfrac{c}{d}$ (see lemma \ref{coef-hens}), and verified a relationship (\ref{relatio}) (see lemma \ref{dev-ratio}). Finally, in the main theorem, we demonstrate the periodicity of the $p$-adic expansion of $\dfrac{c}{d}$.

\section{Definitions and properties}
We will recall some definitions and basic facts from $p$-adic
numbers (see \cite{4}. Throughout this paper $p$ is a prime number, $\mathbb{Q}$
is the field of rational numbers, $\mathbb{Q}^+$ is the field of
nonnegative rational numbers and $\mathbb{R}$ is the field of real
numbers. We use $\left|.\right|$ to denote the ordinary absolute
value, $v_p$ the $p$-adic valuation and $\left|.\right|_p$ the $p$-adic
absolute value. The field of $p$-adic numbers $\mathbb{Q}_{p}$ is
the completion of $\mathbb{Q}$ with respect to the $p$-adic absolute
value. We denote the ring of $p$-adic integers by $\mathbb{Z}_{p}$.
Every element of $\mathbb{Q}_{p}$ can be expressed uniquely by the $p$-adic expansion $\overset{+\infty }{
\underset{n=-j}{\sum }}\alpha _{n}p^{n}$ with $\alpha _{i}\in
\{0,1,..,p-1\}$ for $i\geq -j$. In $\mathbb{Z}_{p}$ we have simply
$j=0$.\\

Now, we give in the following definition the requested algorithm for a rational number

\begin{defin} \label{algo-hensel}
\label{Algo-ratio}Let $\dfrac{c}{d}\in \mathbb{Q} ^{+}\cap \mathbb{Z}_{p}$ , with
$c\in \mathbb{N} $ , $d\in \mathbb{N}^{\ast }$, and $(c,p)=1$, $(d,p)=1$, $(c,d)=1$. We define the sequences $\left( \alpha _{i}\right) _{i\in\mathbb{N}}$ and $\left( \beta _{i}\right) _{i\in\mathbb{N}}$ by

\begin{equation}\label{frac}
\left\{
\begin{array}{l} 
\beta _{0}=c \\
\\
\alpha _{i}=\beta _{i}d^{-1}\text{mod}p , \forall i\geq 0 \\
\\
\beta _{i+1}=\dfrac{\beta _{i}-\alpha _{i}d}{p}\in\mathbb{Z} ,  \forall i\geq 0 
\end{array}
\right. 
\end{equation}
\end{defin}

\begin{lem}
\label{coef-hens} Under the hypothesis of the definition (\ref{Algo-ratio}), the $p$-adic expansion of $\dfrac{c}{d}$ is given by
$\overset{+\infty }{%
\underset{i=0}{\sum }}\alpha _{i}p^{i}$, with $\alpha _{i}\in
\{0,1,..,p-1\}$, $\forall i\geq 0 $. The opposite is true, i.e, if $\dfrac{c}{d}=\overset{+\infty }{
\underset{i=0}{\sum }}\alpha _{i}p^{i}$, then the sequences $\left( \alpha _{i}\right) _{i\in\mathbb{N}}$ and $\left( \beta _{i}\right) _{i\in\mathbb{N}}$ verified the algorithm (\ref{frac}).
\end{lem}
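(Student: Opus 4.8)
The plan is to run the algorithm symbolically and compare its partial sums with $c/d$. First I would check that the recursion is well posed: since $\alpha_i \equiv \beta_i d^{-1} \pmod p$, we have $\alpha_i d \equiv \beta_i \pmod p$, so $\beta_i-\alpha_i d$ is divisible by $p$ in $\mathbb{Z}$ and hence $\beta_{i+1}=(\beta_i-\alpha_i d)/p\in\mathbb{Z}$. Starting from $\beta_0=c\in\mathbb{Z}$, an immediate induction then shows that every $\beta_i$ is a well-defined integer and every $\alpha_i$ lies in $\{0,1,\dots,p-1\}$ (the latter by the choice of representative in ``$\bmod\, p$'').

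Next, rewriting the last line of (\ref{frac}) as $\beta_i=\alpha_i d+p\,\beta_{i+1}$ and dividing by $d$ gives $\beta_i/d=\alpha_i+p\,(\beta_{i+1}/d)$. Iterating this (formal induction on $n$) yields, for every $n\ge 1$,
\[
\frac{c}{d}=\sum_{i=0}^{n-1}\alpha_i p^{i}+p^{n}\,\frac{\beta_n}{d}.
\]
Since $(d,p)=1$ we have $|d|_p=1$, and since $\beta_n\in\mathbb{Z}$ we have $|\beta_n|_p\le 1$; hence $\bigl|p^n\beta_n/d\bigr|_p\le p^{-n}\to 0$. Therefore the partial sums $\sum_{i=0}^{n-1}\alpha_i p^{i}$ converge to $c/d$ in $\mathbb{Q}_p$, i.e. $c/d=\sum_{i=0}^{\infty}\alpha_i p^{i}$. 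Because the $\alpha_i$ lie in $\{0,\dots,p-1\}$ and the $p$-adic expansion of an element of $\mathbb{Z}_p$ with digits in this range is unique (recalled in Section~2), this series is exactly the $p$-adic expansion of $c/d$.

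For the converse, assume $c/d=\sum_{i\ge 0}\alpha_i p^{i}$ with $\alpha_i\in\{0,\dots,p-1\}$, set $\beta_0=c$, and define $\beta_{i+1}=(\beta_i-\alpha_i d)/p$. I would prove by induction that $\beta_i\in\mathbb{Z}$ and $\beta_i/d=\sum_{j\ge 0}\alpha_{i+j}p^{j}$ in $\mathbb{Z}_p$: the base case is the hypothesis, and in the step one subtracts $\alpha_i$ and divides by $p$. Reducing $\beta_i=d\sum_{j\ge 0}\alpha_{i+j}p^{j}$ modulo $p$ gives $\beta_i\equiv d\,\alpha_i\pmod p$, which both identifies $\alpha_i=\beta_i d^{-1}\bmod p$ and (since $\beta_i,\alpha_i d\in\mathbb{Z}$) shows $p\mid\beta_i-\alpha_i d$, so $\beta_{i+1}\in\mathbb{Z}$; thus $(\alpha_i)$ and $(\beta_i)$ satisfy (\ref{frac}). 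Alternatively, this direction is immediate from the forward part together with uniqueness of the $p$-adic expansion, because the algorithm outputs a uniquely determined digit sequence.

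I do not anticipate a real obstacle; the only subtle point is the interaction between ordinary integrality and $p$-adic convergence. One must \emph{not} try to control $\beta_n$ in the archimedean absolute value (it may grow without bound), but instead observe that $\beta_n\in\mathbb{Z}$ already forces $|\beta_n|_p\le 1$, and that the coprimality hypothesis $(d,p)=1$ is precisely what makes the tail $p^{n}\beta_n/d$ tend to zero in $\mathbb{Q}_p$.
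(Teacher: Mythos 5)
Your proof is correct and follows essentially the same route as the paper: the same telescoping identity $\dfrac{c}{d}=\sum_{i=0}^{n-1}\alpha_i p^{i}+p^{n}\dfrac{\beta_n}{d}$ combined with the $p$-adic estimate $\left\vert p^{n}\beta_n/d\right\vert_p\leq p^{-n}$ for the forward direction, and an induction identifying $\alpha_i$ as $\beta_i d^{-1}\bmod p$ for the converse. Your explicit verification that $p\mid\beta_i-\alpha_i d$ (so that the recursion is well posed over $\mathbb{Z}$) is a detail the paper leaves implicit in Definition \ref{Algo-ratio}, but the argument is the same.
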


\begin{proof}
Let $\left( \alpha _{i}\right) _{i\in\mathbb{N}}$ and $\left( \beta _{i}\right) _{i\in\mathbb{N}}$ as in the definition (\ref{Algo-ratio}). We have 
\begin{eqnarray*}
\dfrac{c}{d}&=& \alpha _{0}+\dfrac{\beta_{1}}{d}p\\
&=& \alpha _{0}+\alpha _{1}p+\dfrac{\beta_{2}}{d}p^2\\
&...&\\
&=& \alpha _{0}+\alpha _{1}p+...+\alpha _{n}p^n+\dfrac{\beta_{n+1}}{d}p^{n+1}
\end{eqnarray*}
So $$\left| \dfrac{c}{d}-\overset{n}{
\underset{i=0}{\sum }}\alpha _{i}p^{i} \right|_p \leq \dfrac{1}{p^{n+1}}$$
therefore $\overset{+\infty }{
\underset{i=0}{\sum }}\alpha _{i}p^{i} = \dfrac{c}{d}$.

For the second part, we suppose $\dfrac{c}{d}=\overset{+\infty }{
\underset{i=0}{\sum }}\alpha _{i}p^{i}$, and we prove by recursion that the sequences $\left( \alpha _{i}\right) _{i\in\mathbb{N}}$ and $\left( \beta _{i}\right) _{i\in\mathbb{N}}$ verified the algorithm (\ref{frac}). For $i=0$, we have
$\dfrac{c}{d}=\alpha _{0}\text{mod}p$, then $\alpha
_{0}=cd^{-1}\text{mod}p=\beta _{0}d^{-1}\text{mod}p$. 
Now, suppose that $\alpha _{i}=\beta _{i}d^{-1}\text{mod}p$ \ and $\beta
_{i+1}=\dfrac{\beta
_{i}-\alpha _{i}d}{p}$, so we have
\begin{eqnarray*}
\alpha _{i} &=&\beta _{i}d^{-1}\text{mod}p\Longrightarrow \alpha
_{i+1}p+\alpha _{i}=\beta _{i}d^{-1}\text{mod}p \\
\\
&\Longrightarrow &\alpha _{i+1}p=\left( \beta _{i}d^{-1}-\alpha
_{i}\right)
\text{mod}p  \\
\\
&\Longrightarrow &\alpha _{i+1}=\left( \frac{\beta _{i}-\alpha _{i}}{p}%
\right) d^{-1}\text{mod}p=\beta _{i+1}d^{-1}\text{mod}p
\end{eqnarray*}
therefore $\forall i\geq 0:\alpha _{i}=\beta
_{i}d^{-1}\text{mod}p.$
\end{proof}

\begin{lem}
\label{dev-ratio} Under the hypothesis of the definition (\ref{Algo-ratio}),\ we have
\begin{equation}\label{relatio}
c=d\left( \overset{i-1}{\underset{n=0}{\sum }}\alpha
_{n}p^{n}\right) +\beta _{i}p^{i}\text{ \ \ \ \ , \ \ }\forall
i\in\mathbb{N} ^{\ast }
\end{equation}
\end{lem}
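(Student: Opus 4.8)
The plan is to prove the identity (\ref{relatio}) by a straightforward induction on $i$, using only the recurrence $\beta_{i+1}=\dfrac{\beta_i-\alpha_i d}{p}$ from the algorithm (\ref{frac}). Before starting the induction I would first record that the division by $p$ defining $\beta_{i+1}$ is legitimate: since $\alpha_i\equiv\beta_i d^{-1}\pmod p$, multiplying by $d$ gives $\alpha_i d\equiv\beta_i\pmod p$, so $p\mid\beta_i-\alpha_i d$ and hence $\beta_{i+1}\in\mathbb{Z}$ as claimed in the definition. The useful reformulation of the recurrence is then
$$\beta_i=\alpha_i d+p\,\beta_{i+1}\qquad\text{for all }i\ge 0.$$

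For the base case $i=1$, this reformulation with $i=0$ together with $\beta_0=c$ gives $c=\alpha_0 d+p\,\beta_1$, which is precisely (\ref{relatio}) for $i=1$. For the inductive step, I would assume $c=d\left(\sum_{n=0}^{i-1}\alpha_n p^n\right)+\beta_i p^i$ for some $i\ge 1$ and substitute $\beta_i=\alpha_i d+p\,\beta_{i+1}$ into the last term, obtaining
$$c=d\left(\sum_{n=0}^{i-1}\alpha_n p^n\right)+\alpha_i d\,p^i+\beta_{i+1}p^{i+1}=d\left(\sum_{n=0}^{i}\alpha_n p^n\right)+\beta_{i+1}p^{i+1},$$
which is (\ref{relatio}) with $i$ replaced by $i+1$; this closes the induction.

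I do not expect any real obstacle here. The only point that needs a moment's attention is the justification that each $\beta_i$ is an integer, so that the recurrence is well defined and the rearrangement $\beta_i=\alpha_i d+p\,\beta_{i+1}$ is a valid identity in $\mathbb{Z}$; as noted above this is immediate from the congruence defining $\alpha_i$. As an alternative presentation, one could also observe that (\ref{relatio}) is nothing but the finite telescoping identity already displayed in the proof of Lemma \ref{coef-hens}, namely $\dfrac{c}{d}=\sum_{n=0}^{i-1}\alpha_n p^n+\dfrac{\beta_i}{d}p^i$, after multiplying through by $d$; so the lemma can be read off directly from that computation.
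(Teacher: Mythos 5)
Your proof is correct and follows essentially the same route as the paper: induction on $i$, with the base case $c=\alpha_0 d+p\beta_1$ and the inductive step carried by substituting the rearranged recurrence $\beta_i=\alpha_i d+p\beta_{i+1}$. The added justification that each $\beta_i$ is an integer, and the observation that the identity can also be read off from the computation in Lemma \ref{coef-hens}, are welcome but do not change the argument.
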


\begin{proof}
We prove this lemma, also, by induction. For $i=1$, it's obvious.
\begin{equation*}
d\left( \overset{0}{\underset{n=0}{\sum }}\alpha _{n}p^{n}\right)
+\beta _{1}p=d\alpha _{0}+\left( \frac{c-\alpha _{0}d}{p}\right) p=c
\end{equation*}%
Suppose that, the relationship is true for $i$. From
(\ref{frac}), we have $\beta _{i}=\alpha _{i}d+\beta _{i+1}p$.
Then
\begin{eqnarray*}
c &=&d\left( \overset{i-1}{\underset{n=0}{\sum }}\alpha
_{n}p^{n}\right)
+\beta _{i}p^{i} \\
&=&d\left( \overset{i-1}{\underset{n=0}{\sum }}\alpha
_{n}p^{n}\right)
+\left( \beta _{i+1}p+\alpha _{i}d\right) p^{i} \\
&=&d\left( \overset{i}{\underset{n=0}{\sum }}\alpha _{n}p^{n}\right)
+\beta _{i+1}p^{i+1}
\end{eqnarray*}
So, the relationship is true for all $i\in \mathbb{N}$.
\end{proof}
\begin{rem}
Let $r=\dfrac{c'}{d'}\in \mathbb{Q} ^{+}$, but not in $\mathbb{Z}_{p}$, i.e. the $p$-adic expansion of
 $\dfrac{c'}{d'}$ is given by $\overset{+\infty }{%
\underset{n=-j}{\sum }}\alpha _{n+j}p^{n}$, with $j \neq 0$ and  $\alpha _{i}\in
\{0,1,..,p-1\}$ ,\ \ $\forall i \geq -j$. In this case, we can suppose
$c'=c\in \mathbb{N} $ , $d'=p^j d\in \mathbb{N}^{\ast }$, with $(d,p)=1$, and $(c,p)=1$. So, we have $\dfrac{c}{d}=\overset{+\infty }{
\underset{n=0}{\sum }}\alpha _{n}p^{n}$. We define a sequence $\left( \beta _{i}\right) _{i\in\mathbb{N}}$ by the same way
\begin{equation}
\left\{
\begin{array}{l}
\beta _{0}=c=c' \\
\\
\beta _{i+1}=\dfrac{\beta _{i}-\alpha _{i}d}{p} = \dfrac{\beta _{i}p^j-\alpha _{i}d'}{p^{j+1}} \in\mathbb{Z}
\end{array}%
\right. 
\end{equation}

\end{rem}

\section{Results and proof}

To show that the algorithm (\ref{Algo-ratio}) stops after a certain rank, it suffices to prove that the sequence $\left(
\left\vert \beta _{n}\right\vert \right) _{n\in\mathbb{N}}$ is bounded or decreasing. This is the subject of the main theorem.

\begin{mainthm}
The sequence $\left( \beta _{i}\right) _{i\in \mathbb{N}}$ given in (\ref{frac})\ verified the following cases:
\\
\textbf{Case1. }If $c<d$, then
\begin{equation*}
0\leq \left\vert \beta _{i}\right\vert <d\text{ \ \ , \ }\forall
i\in \mathbb{N}
\end{equation*}%
\textbf{Case2. }If $c>d$ and $p\geq 3$, we have, also, two
cases:\\

    \textbf{Case2.1. }If $0<\frac{c(p-1)}{2dp}<1$, then for all $i\in\mathbb{N} ^{\ast }$, we have
$\left\vert \beta _{i}\right\vert <d$.
\\

\textbf{Case2.2. }
 If $1<\frac{c(p-1)}{2dp}$, then for a fixed integer
\begin{equation}\label{fixed-integer}
m=\left[ \dfrac{\log \left( \dfrac{c(p-1)}{2dp}\right) }{\log
p}\right]
\end{equation}
it comes that
\begin{equation*}
\left\{
\begin{array}{l}
d < \left\vert \beta _{i}\right\vert < c\text{ \ \ \ \ for \ \ \ }%
0\leq i<m+1 \\
\\
0\leq \left\vert \beta _{i}\right\vert < d\text{ \ \ \ \ for\ \
\ \ \ \ \ \
\ \ }m+1 < i \\
\\
0\leq\left\vert \beta _{i}\right\vert < c\text{ \ \ \ \ for\ \
\ \ \ \ \ \
\ \ }m+1 = i%
\end{array}%
\right.
\end{equation*}
\end{mainthm}

\begin{proof}
We treat all cases:\\
Case1.  Let $c<d$, we use the proof by induction. For $i=0 $ is trivial. We suppose that in the rank $n$ we have $\left\vert \beta _{i}\right\vert <d$, and we prove the inequality
$\left\vert \beta _{i+1}\right\vert <d$ .\ Indeed, we have%
\begin{eqnarray*}
\left\vert \beta _{i+1}\right\vert &=&\left\vert \dfrac{\beta
_{i}-\alpha
_{i}d}{p}\right\vert \\
&<&\frac{1}{p}\left\vert \beta _{i}\right\vert +\frac{1}{p
}\left\vert \alpha _{i}d\right\vert \\
&<&\frac{1}{p}d+\frac{p-1}{p}d=d
\end{eqnarray*}
Case2. For $c>d$ and $p\geq 3$, we prove the two following cases:\\ 

Case2.1. We suppose $0<\frac{c(p-1)}{2dp}<1$. Also, we prove by recurrence that $\left\vert \beta _{i}\right\vert <d$. Starting with $i=1$, we have
\begin{eqnarray*}
0<\frac{c(p-1)}{2dp}<1\Longleftrightarrow -\dfrac{\alpha_{0}d}{p}<\frac{c}{p}-%
\dfrac{\alpha_{0}d}{p}<\frac{2d}{p-1}-\dfrac{\alpha_{0}d}{p}
\end{eqnarray*}
So
\begin{eqnarray*}
-d<-\dfrac{\alpha_{0}d}{p}<\beta _{1}<d\left( \frac{2}{p-1}-%
\dfrac{\alpha_{0}}{p}\right) <d
\end{eqnarray*}
Now, we assume that the property is true at rank $i$, and we show it
at rank $i+1$. Indeed, we have
\[
-d<\beta _{i}<d\Longleftrightarrow -d<\frac{-d\left(
1+\alpha_{i}\right) }{p}<\dfrac{\beta
_{i}-\alpha_{i}d}{p}<\frac{d\left( 1-\alpha_{i}\right) }{p}<d
\]
then $-d<\beta _{i+1}<d$. Which means that for every $i\in\mathbb{N}
^{\ast }$, we have $\left\vert \beta _{i}\right\vert <d$.

Case2.2. Let the integer $m$ given in (\ref{fixed-integer}), we suppose that $1<\dfrac{c(p-1)}{2dp}$.\\
Firstly, we will prove that for all $0\leq i\leq m$ the terms
$\beta _{i}$\ are strictly positive. Indeed, we assume that there is $k\in \left\{ 1,...,m\right\} $, such
that $\beta _{k}<0$. From definition (\ref{Algo-ratio}),
we have $$\dfrac{\beta _{k-1}-\alpha_{k-1}d}{p} <0$$
which means $\beta _{k-1}<dp$. Multiplying both sides by $p^{k-1}$, and applying the lemma (\ref{dev-ratio}), it comes $$c<d\left( \overset{k-2}{\underset{n=0}{\sum }}
\alpha_{n}p^{n}\right) +dp^{k}$$
The coefficients $\alpha_{n}$ are strictly less than $p$, so
$$c<dp\left( \dfrac{p^{k-1}-1}{p-1}+p^{k-1}\right) $$
Then, after simplification
\begin{eqnarray*}
c<\frac{pd}{p-1}\left( p^{k}-1\right) <\frac{2pd}{p-1}
p^{k}
\end{eqnarray*}
Thus
\[\dfrac{\log \left( \dfrac{c(p-1)}{2dp}\right) }{\log p}<k\]
however $m+1\leq k$. Where does the contradiction come from. Which means
that for every $0\leq i\leq m$, we have $\beta _{k}>0$.\\
Now, we prove the inequalities $d\leq \beta
_{i}\leq c$ \ for $i\in \left\{ 0,...,m\right\} $. \\
The inequality in law is easily proved by recurrence for all
$0\leq i\leq m$. To prove the inequality in the left, we use
the absurd. We assume that, there is a positive integer $k\in \left\{
1,...,m\right\} $\ such that $0<\beta _{k}<d$ (the condition $d<c$
implies that $k\neq 0$). By lemma (\ref{dev-ratio}) we obtain
\begin{eqnarray*}
\beta_{k} < d\Longleftrightarrow c<d\left( \overset{k-1}{\underset{n=0}{
\sum }}\alpha_{n}p^{n}\right) +dp^{k}
\end{eqnarray*}
So
\begin{eqnarray*}
&c<dp(1+p+...+p^{k-1}+p^{k-1})
\end{eqnarray*}
Hence
\begin{eqnarray*}
&c<\dfrac{dp}{p-1}\left( 2p^{k}-p^{k-1}-1\right) \Longleftrightarrow
c<\dfrac{2pd}{p-1}p^{k}
\end{eqnarray*}
It comes that 
$$\dfrac{\log \left( \dfrac{c(p-1)}{2dp}\right) }{\log p}<k$$ 
However $
m+1\leq k$, hence the contradiction. Which means that for all $0\leq i\leq m$,
we have $c \geq \beta _{k}\geq d$.\\
For the second part of this case, we suppose there is a positive integer
$k > m+1$\ such that $\left\vert \beta _{k}\right\vert > d$,
that is $\beta _{k}> d$ \ \ or \ $\beta _{k}< -d$. By lemma (\ref{dev-ratio}), we have
\[
\beta _{k}> d\Longleftrightarrow c > d\left( \overset{k-1}{\underset{n=0
}{\sum }}\alpha_{n}p^{n}\right) +dp^{k}> dp^{k}
\]
hence $\ \dfrac{c(p-1)}{2dp}> \left( \dfrac{p-1}{2}\right) p^{k-1}>
p^{k-1}$, therefore
\[
\dfrac{\log \left( \dfrac{c(p-1)}{2dp}\right) }{\log p}> k-1
\]
then
\[
m+1=\left[ \dfrac{\log \left( \dfrac{c(p-1)}{2dp}\right) }{\log
p}\right] +1> k
\]
Contradiction. For the second inequality, we have by the
formula (\ref{frac})
$$\beta _{k}=\dfrac{\beta _{k-1}-\alpha _{k}d}{p}\leq -d$$
then $\beta _{k-1}\leq d(\alpha _{k}-p)$, however $\alpha
_{k}\leq p-1$, thus $\beta _{k-1}\leq -d$. And so on, until $\beta
_{0}=c\leq -d$, which is another contradiction. So, for all $i\geq
m+2$\ we have $\left\vert \beta _{i}\right\vert \leq d$.
The last part is easly.

\end{proof}

\begin{exa}
For $p=3$, $c=7$ and $d=11$, the case 1 is verified (see table 1)

$$\textbf{Table 1:  Case 1}$$
\begin{tabular}{|c|cccccccccccccccc|}
\hline
$k$ & $0$ & $1$ & $2$ & $3$ & $4$ & $5$ & $6$ & $7$ & $8$ & $9$ & $10$ & $11$
& $12$ & $13$ & $14$ & $15$ \\ \hline
$\alpha _{k}$ & $2$ & $2$ & $0$ & $0$ & $1$ & $1$ & $2$ & $0$ & $0$ & $1$ & $%
1$ & $2$ & $0$ & $0$ & $1$ & $1$ \\ 
$\beta _{k}$ & $7$ & $-5$ & $-9$ & $-3$ & $-1$ & $-4$ & $-5$ & $-9$ & $-3$ & 
$-1$ & $-4$ & $-5$ & $-9$ & $-3$ & $-1$ & $-4$ \\ \hline
\end{tabular}

\bigskip 

For $p=3$, $c=8$ and $d=5$,  the case 2.1 is verified (see table 2)

$$\textbf{Table 2:  Case 2.1}$$
\begin{tabular}{|c|cccccccccccccccc|}
\hline
$k$ & $0$ & $1$ & $2$ & $3$ & $4$ & $5$ & $6$ & $7$ & $8$ & $9$ & $10$ & $11$
& $12$ & $13$ & $14$ & $15$ \\ \hline
$\alpha _{k}$ & $1$ & $2$ & $0$ & $1$ & $2$ & $1$ & $0$ & $1$ & $2$ & $1$ & $%
0$ & $1$ & $2$ & $1$ & $0$ & $1$ \\ 
$\beta _{k}$ & $8$ & $1$ & $-3$ & $-1$ & $-2$ & $-4$ & $-3$ & $-1$ & $-2$ & $%
-4$ & $-3$ & $-1$ & $-2$ & $-4$ & $-3$ & $-1$ \\ \hline
\end{tabular}

\bigskip 

For $p=3$, $c=17$ and $d=5$, we have $m=0$ and  the case 2.2 is verified (see table 3)

$$\textbf{Table 3:  Case 2.2 for m=0}$$

\begin{tabular}{|c|cccccccccccccccc|}
\hline
$k$ & $0$ & $1$ & $2$ & $3$ & $4$ & $5$ & $6$ & $7$ & $8$ & $9$ & $10$ & $11$
& $12$ & $13$ & $14$ & $15$ \\ \hline
$\alpha _{k}$ & $1$ & $2$ & $2$ & $1$ & $0$ & $1$ & $2$ & $1$ & $0$ & $1$ & $%
2$ & $1$ & $0$ & $1$ & $2$ & $1$ \\ 
$\beta _{k}$ & $17$ & $4$ & $-2$ & $-4$ & $-3$ & $-1$ & $-2$ & $-4$ & $-3$ & 
$-1$ & $-2$ & $-4$ & $-3$ & $-1$ & $-2$ & $-4$ \\ \hline
\end{tabular}

\bigskip  

For $p=3$, $c=124$ and $d=7$, we have $m=1$ and  the case 2.2 is verified (see table 4)
$$\textbf{Table 4:  Case 2.2 for m=1}$$

\begin{tabular}{|c|cccccccccccccccc|}
\hline
$k$ & $0$ & $1$ & $2$ & $3$ & $4$ & $5$ & $6$ & $7$ & $8$ & $9$ & $10$ & $11$
& $12$ & $13$ & $14$ & $15$ \\ \hline
$\alpha _{k}$ & $1$ & $0$ & $1$ & $2$ & $2$ & $0$ & $1$ & $0$ & $2$ & $1$ & $%
2$ & $0$ & $1$ & $0$ & $2$ & $1$ \\ 
$\beta _{k}$ & $124$ & $39$ & $2$ & $-4$ & $-6$ & $-2$ & $-3$ & $-1$ & $-5$
& $-4$ & $-6$ & $-2$ & $-3$ & $-1$ & $-5$ & $-4$ \\ \hline
\end{tabular}

\bigskip 

For $p=3$, $c=247$ and $d=7$, we have $m=2$ and  the case 2.2 is verified (see table 5)
$$\textbf{Table 5:  Case 2.2 for m=2}$$

\begin{tabular}{|c|cccccccccccccccc|}
\hline
$k$ & $0$ & $1$ & $2$ & $3$ & $4$ & $5$ & $6$ & $7$ & $8$ & $9$ & $10$ & $11$
& $12$ & $13$ & $14$ & $15$ \\ \hline
$\alpha _{k}$ & $1$ & $2$ & $1$ & $2$ & $0$ & $2$ & $1$ & $2$ & $0$ & $1$ & $%
0$ & $2$ & $1$ & $2$ & $0$ & $1$ \\ 
$\beta _{k}$ & $247$ & $80$ & $22$ & $5$ & $-3$ & $-1$ & $-5$ & $-4$ & $-6$
& $-2$ & $-3$ & $-1$ & $-5$ & $-4$ & $-6$ & $-2$ \\ \hline
\end{tabular}

\end{exa}

\bigskip 
In the following corollary, we give a particlar case $p=2$.
\begin{cor}
For $p=2$, The sequence $\left( \beta _{i}\right) _{i\in \mathbb{N}}$ given in (\ref{frac})\ verified the same cases:
\\
\textbf{Cas1. }If $c<d$, then
\begin{equation*}
0\leq \left\vert \beta _{i}\right\vert <d\text{ \ \ , \ }\forall
i\in \mathbb{N}
\end{equation*}%
\textbf{Cas2. :}{\large \ }If $c>d$, we have also two
cases:\\

    \textbf{Cas2.1. }If $0<\frac{c}{2d}<1$, then for all $i\in\mathbb{N} ^{\ast }$ we have
$\left\vert \beta _{i}\right\vert <d$.
\\

\textbf{Cas2.2. }
 If $1<\frac{c}{2d}$, then for a fixed integer
\begin{equation*}
m=\left[ \dfrac{\log \left( \dfrac{c}{2d}\right) }{\log
2}\right]
\end{equation*}%
it comes that
\begin{equation*}
\left\{
\begin{array}{l}
d\leq \left\vert \beta _{i}\right\vert \leq c\text{ \ \ \ \ for \ \ \ }%
0\leq i<m+1 \\
\\
0\leq \left\vert \beta _{i}\right\vert \leq d\text{ \ \ \ \ for\ \
\ \ \ \ \ \
\ \ }m+1\leq i\\
\\
0\leq\left\vert \beta _{i}\right\vert < c\text{ \ \ \ \ for\ \
\ \ \ \ \ \
\ \ }m+1 = i%
\end{array}%
\right.
\end{equation*}
\end{cor}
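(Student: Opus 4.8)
The plan is to reproduce the proof of the Main Theorem almost line by line with $p$ set equal to $2$, checking that each estimate survives the loss of slack that $p\ge 3$ previously provided. The only structural change is that now $\alpha_i\in\{0,1\}$, so that $|\alpha_i|d\le d$ and, crucially, every partial sum obeys the sharp bound $\sum_{n=0}^{N}\alpha_n 2^{n}\le 2^{N+1}-1$; this sharper geometric estimate is exactly what lets the threshold be expressed through $\frac{c}{2d}$ rather than through $\frac{c(p-1)}{2dp}=\frac{c}{4d}$.

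For Case 1 I would keep the induction of the Main Theorem verbatim: from $|\beta_i|<d$ and $|\alpha_i|d\le d$ one gets
\[
|\beta_{i+1}|=\Bigl|\frac{\beta_i-\alpha_i d}{2}\Bigr|\le\frac12|\beta_i|+\frac12|\alpha_i|d<\frac12 d+\frac12 d=d,
\]
the first summand being strict. For Case 2.1, since $c$ and $d$ are odd and coprime we have $\alpha_0=1$, hence $\beta_1=\frac{c-d}{2}$, and $d<c<2d$ forces $0<\beta_1<\frac d2<d$; the inductive step splits on $\alpha_i\in\{0,1\}$, with $\beta_{i+1}=\beta_i/2\in(-\tfrac d2,\tfrac d2)$ when $\alpha_i=0$ and $\beta_{i+1}=\frac{\beta_i-d}{2}\in(-d,0)$ when $\alpha_i=1$, so $|\beta_{i+1}|<d$ in either case.

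For Case 2.2, write $2^m\le\frac{c}{2d}<2^{m+1}$ and mirror the sub-steps of the Main Theorem, each applying Lemma \ref{dev-ratio}. First, were some $\beta_k$ with $1\le k\le m$ negative, then $\beta_{k-1}<\alpha_{k-1}d\le d$, which for $k=1$ reads $c<d$ (impossible), and for $k\ge 2$ gives $c=d\sum_{n=0}^{k-2}\alpha_n2^n+\beta_{k-1}2^{k-1}<d(2^{k-1}-1)+d\,2^{k-1}<2d\cdot 2^{k-1}$, i.e.\ $2^m\le\frac{c}{2d}<2^{k-1}$, so $m<k-1$, contradicting $k\le m$. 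Next, if $0<\beta_k<d$ for some $1\le k\le m$ (the index $0$ excluded since $c>d$), then $c<d(2^k-1)+d\,2^k<2d\cdot 2^k$, so $m<k$, again impossible; together with the forward induction $\beta_{k+1}=\frac{\beta_k-\alpha_k d}{2}\le\frac{\beta_k}{2}\le\frac c2<c$ this gives $d\le\beta_k\le c$ for $0\le k\le m$. Finally, for $k\ge m+2$: if $\beta_k>d$ then $c\ge\beta_k 2^k>d\,2^k$, so $\frac{c}{2d}>2^{k-1}$ and, using $\frac{c}{2d}<2^{m+1}$, we get $k-1<m+1$, i.e.\ $k\le m+1$, contrary to $k\ge m+2$; and if $\beta_k\le-d$ then $\beta_{k-1}\le d(\alpha_{k-1}-2)\le-d$, so descending forces $\beta_0=c\le-d$, absurd. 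Hence $|\beta_k|\le d$ for $k\ge m+2$, while the leftover index $i=m+1$ gives $\beta_{m+1}\le\frac c2<c$ if $\beta_{m+1}\ge0$ and $|\beta_{m+1}|<d<c$ otherwise, which is the third displayed line.

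The step I expect to demand the most care is precisely this boundary index $i=m+1$, together with the bookkeeping of strict versus non-strict inequalities throughout: for $p\ge 3$ the Main Theorem could afford to discard a factor $\frac{p}{p-1}$ or $\frac{2}{p-1}$, but for $p=2$ both equal $2$, so every geometric-series estimate must be run with the sharp bound $\sum_{n=0}^{N}\alpha_n2^n\le 2^{N+1}-1$ and no slack. Everything else is routine arithmetic, identical in shape to the Main Theorem's computations.
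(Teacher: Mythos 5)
Your proof is correct and follows the paper's intended route: the paper's own proof of this corollary is the one-line remark that it is ``similar to that of the main theorem,'' and you carry out exactly that adaptation to $p=2$, with the same case split, the same use of Lemma \ref{dev-ratio}, and the sharp bound $\sum_{n=0}^{N}\alpha_n2^n\le 2^{N+1}-1$ replacing the $p\ge 3$ slack. One small point worth recording: you (rightly) establish $|\beta_i|\le d$ only for $i\ge m+2$, exactly as in the Main Theorem; the corollary's displayed condition ``$m+1\le i$'' appears to be a typo, since for example $p=2$, $c=23$, $d=3$ gives $m=1$ and $\beta_{2}=5>d$, while your separate treatment of the boundary index $i=m+1$ (yielding only $0\le\beta_{m+1}<c$) is the correct statement.
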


\begin{proof}
The proof is similar to that of the main theorem.
\end{proof}

\begin{exa}
For $p=2$, $c=5$ and $d=9$, the case 1 is verified (see table 6)
$$\textbf{Table 6:  Case 1}$$

\begin{tabular}{|c|cccccccccccccccc|}
\hline
$k$ & $0$ & $1$ & $2$ & $3$ & $4$ & $5$ & $6$ & $7$ & $8$ & $9$ & $10$ & $11$
& $12$ & $13$ & $14$ & $15$ \\ \hline
$\alpha _{k}$ & $1$ & $0$ & $1$ & $1$ & $1$ & $0$ & $0$ & $0$ & $1$ & $1$ & $%
1$ & $0$ & $0$ & $0$ & $1$ & $1$ \\ 
$\beta _{k}$ & $5$ & $-2$ & $-1$ & $-5$ & $-7$ & $-8$ & $-4$ & $-2$ & $-1$ & 
$-5$ & $-7$ & $-8$ & $-4$ & $-2$ & $-1$ & $-5$ \\ \hline
\end{tabular}

\bigskip 

For $p=2$, $c=5$ and $d=3$,  the case 2.1 is verified (see table 7)
$$\textbf{Table 7:  Case 2.1}$$

\begin{tabular}{|c|cccccccccccccccc|}
\hline
$k$ & $0$ & $1$ & $2$ & $3$ & $4$ & $5$ & $6$ & $7$ & $8$ & $9$ & $10$ & $11$
& $12$ & $13$ & $14$ & $15$ \\ \hline
$\alpha _{k}$ & $1$ & $1$ & $1$ & $0$ & $1$ & $0$ & $1$ & $0$ & $1$ & $0$ & $%
1$ & $0$ & $1$ & $0$ & $1$ & $0$ \\ 
$\beta _{k}$ & $5$ & $1$ & $-1$ & $-2$ & $-1$ & $-2$ & $-1$ & $-2$ & $-1$ & $%
-2$ & $-1$ & $-2$ & $-1$ & $-2$ & $-1$ & $-2$ \\ \hline
\end{tabular}

\bigskip 

For $p=2$, $c=7$ and $d=3$, we have $m=0$ and  the case 2.2 is verified (see table 8)
$$\textbf{Table 8:  Case 2.2 for m=0}$$

\begin{tabular}{|c|cccccccccccccccc|}
\hline
$k$ & $0$ & $1$ & $2$ & $3$ & $4$ & $5$ & $6$ & $7$ & $8$ & $9$ & $10$ & $11$
& $12$ & $13$ & $14$ & $15$ \\ \hline
$\alpha _{k}$ & $1$ & $0$ & $1$ & $1$ & $0$ & $1$ & $0$ & $1$ & $0$ & $1$ & $%
0$ & $1$ & $0$ & $1$ & \multicolumn{1}{c}{$0$} & $1$ \\ 
$\beta _{k}$ & $7$ & $2$ & $1$ & $-1$ & $-2$ & $-1$ & $-2$ & $-1$ & $-2$ & $%
-1$ & $-2$ & $-1$ & $-2$ & $-1$ & \multicolumn{1}{c}{$-2$} & $-1$ \\ \hline
\end{tabular}

\bigskip  

For $p=2$, $c=13$ and $d=3$, we have $m=1$ and  the case 2.2 is verified (see table 9)
$$\textbf{Table 9:  Case 2.2 for m=1}$$

\begin{tabular}{|c|cccccccccccccccc|}
\hline
$k$ & $0$ & $1$ & $2$ & $3$ & $4$ & $5$ & $6$ & $7$ & $8$ & $9$ & $10$ & $11$
& $12$ & $13$ & $14$ & $15$ \\ \hline
$\alpha _{k}$ & $1$ & $1$ & $1$ & $1$ & $0$ & $1$ & $0$ & $1$ & $0$ & $1$ & $%
0$ & $1$ & $0$ & $1$ & $0$ & $1$ \\ 
$\beta _{k}$ & $13$ & $5$ & $1$ & $-1$ & $-2$ & $-1$ & $-2$ & $-1$ & $-2$ & $%
-1$ & $-2$ & $-1$ & $-2$ & $-1$ & $-2$ & $-1$ \\ \hline
\end{tabular}

\bigskip 

For $p=2$, $c=25$ and $d=3$, we have $m=2$ and  the case 2.2 is verified (see table 10)
$$\textbf{Table 10:  Case 2.2 for m=2}$$

\begin{tabular}{|c|cccccccccccccccc|}
\hline
$k$ & $0$ & $1$ & $2$ & $3$ & $4$ & $5$ & $6$ & $7$ & $8$ & $9$ & $10$ & $11$
& $12$ & $13$ & $14$ & $15$ \\ \hline
$\alpha _{k}$ & $1$ & $1$ & $0$ & $0$ & $1$ & $1$ & $0$ & $1$ & $0$ & $1$ & $%
0$ & $1$ & $0$ & $1$ & $0$ & $1$ \\ 
$\beta _{k}$ & $25$ & $11$ & $4$ & $2$ & $1$ & $-1$ & $-2$ & $-1$ & $-2$ & $%
-1$ & $-2$ & $-1$ & $-2$ & $-1$ & $-2$ & $-1$ \\ \hline
\end{tabular}

\end{exa}

%


\bigskip

Rafik Belhadef\\
Laboratory of pure and applied mathematics\\
Mohamed Seddik Ben Yahia University, Jijel , BP 98, Jijel, Algeria\\
Corresponding author. Email: belhadef\underline{
}rafik@univ-jijel.dz

Henri-Alex Esbelin\\
LIMOS, Clermont Auvergne University, Aubi\`ere; France

\label{lastpage}

\end{document}